\documentclass{article}

\usepackage{amssymb}
\usepackage{amsmath,amsthm}
\usepackage[dvips]{graphicx}
\usepackage{wrapfig}
\usepackage{bm}

\newtheorem{theorem}{Theorem}[section]
\newtheorem{corollary}[theorem]{Corollary}
\newtheorem{lemma}[theorem]{Lemma}
\newtheorem{proposition}[theorem]{Proposition}

\theoremstyle{definition}

\newtheorem{example}[theorem]{Example}
\theoremstyle{remark}

\begin{document}
\title{Region crossing change on spatial theta-curves}

\author{
Ayaka Shimizu \thanks{Department of Mathematics, National Institute of Technology, Gunma College, 580 Toriba-cho, Maebashi-shi, Gunma, 371-8530, Japan. Email: shimizu@nat.gunma-ct.ac.jp }
\and 
Rinno Takahashi \thanks{Department of Information and Computer Engineering, National Institute of Technology, Gunma College, 580 Toriba-cho, Maebashi-shi, Gunma, 371-8530, Japan.  }
}
\date{\today}

\maketitle

\begin{abstract}
A region crossing change at a region of a spatial-graph diagram is a transformation changing every crossing on the boundary of the region. 
In this paper, it is shown that every spatial graph consisting of theta-curves can be unknotted by region crossing changes. 
\end{abstract}

\section{Introduction} 

A {\it knot} is an embedding of a circle in $S^3$, and a {\it link} is an embedding of some circles in $S^3$. 
A {\it spatial graph} is an embedding of a graph in $S^3$. 
A {\it diagram} of a knot, link or spatial graph $G$ is a projection of $G$ to $S^2$ with over/under information at each crossing, where each crossing is made of two arcs intersecting transversely. 
A knot, link or spatial graph $G$ is {\it unknotted}, or {\it trivial}, if $G$ has a diagram which has no crossings. 
It is well-known that any diagram of a knot or link can be transformed into a diagram of a trivial knot or link by a finite number of {\it crossing changes}, where a crossing change is a local transformation shown in Figure \ref{cc}\footnote{Some spatial graphs cannot be unknotted by crossing changes, such as the complete graph $K_5$. }. 
\begin{figure}[ht]
\begin{center}
\includegraphics[width=30mm]{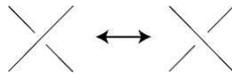}
\caption{Crossing change. }
\label{cc}
\end{center}
\end{figure}

A {\it region crossing change} at a region of a diagram of a knot, link or spatial graph is a local transformation which yields crossing changes at all the crossings on the boundary of the region. 
For knots, the following theorem is shown: 

\phantom{x}
\begin{theorem}[\cite{shimizu-rcc}]
Any diagram of any knot can be transformed into a diagram of the trivial knot by a finite number of region crossing changes. 
\label{thm11}
\end{theorem}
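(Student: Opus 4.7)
The plan is to reduce the theorem to a linear-algebra problem over $\mathbb{F}_2$ and then resolve that problem by controlling the kernel of the resulting incidence map. Since any knot admits an unknotting sequence of ordinary crossing changes, it suffices to prove the local claim that for every knot diagram $D$ and every crossing $c$ of $D$, the crossing change at $c$ alone can be realized by a finite sequence of region crossing changes on $D$.

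To attack the local claim I would label the crossings $c_1,\ldots,c_n$ and regions $R_1,\ldots,R_r$ of $D$ and consider the $n \times r$ matrix $M$ over $\mathbb{F}_2$ whose $(i,j)$-entry records, modulo $2$, the number of times $c_i$ appears on $\partial R_j$. A subset of regions corresponds to a vector $v \in \mathbb{F}_2^r$; performing the corresponding region crossing changes alters exactly the crossings indicated by $Mv \in \mathbb{F}_2^n$; and the local claim becomes surjectivity of $M$. Because a knot projection is a $4$-regular planar graph, Euler's formula gives $r = n+2$, so surjectivity is equivalent to $\dim \ker M = 2$.

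Two canonical elements of $\ker M$ are immediate: the all-ones vector (each crossing is incident to $4 \equiv 0 \pmod 2$ boundary regions) and the characteristic vector of the black regions in a checkerboard coloring of $D$, which exists because the shadow is $4$-regular, since each crossing is incident to exactly two black and two white regions. These two vectors are $\mathbb{F}_2$-independent, giving $\dim \ker M \ge 2$. The heart of the proof, and the step I expect to be the main obstacle, is the reverse inequality: showing no other relations exist. I would approach this combinatorially, by analyzing how the evenness condition at each crossing constrains the local four-region pattern, and then propagating this constraint along the single component of the knot to force any $S \in \ker M$ to coincide with one of the four canonical subsets (empty, everything, black, or white). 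The connectedness of the knot is essential at this step, since for multi-component links the kernel is genuinely larger; this is where the real topological content of the theorem resides.
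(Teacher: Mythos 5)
Your first step---reducing to the claim that a single crossing change is realizable by region crossing changes, then iterating over an unknotting set of crossings---is exactly how the paper uses this theorem: it states that Theorem \ref{thm11} follows from Lemma \ref{realize-k} by repetition. But your proposed proof of the local claim runs in the opposite logical direction from the literature the paper cites. In \cite{shimizu-rcc} the lemma is proved by an explicit construction: splice the diagram at the target crossing $p$ to get a two-component diagram $D_1\cup D_2$, checkerboard-color the sub-diagram $D_1$ alone, and take the regions of $D$ lying in the black regions of $D_1$; a local count shows this set changes $p$ and nothing else. The rank statement you want to establish (rank $M=n$, equivalently $\dim\ker M=2$) is then a \emph{consequence}, proved in \cite{CG} \emph{from} Lemma \ref{realize-k}, and the ``exactly four solutions'' statement appears in \cite{kawauchi-ed} and \cite{hashizume}. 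So if your kernel computation is to serve as the proof, it must be carried out independently, and that is precisely the step you leave as a sketch (``I would approach this combinatorially\dots by propagating this constraint along the single component'').

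The good news is that your sketch can be completed, so this is a gap in execution rather than a wrong approach. For each edge of the underlying projection, record the sum in $\mathbb{F}_2$ of the indicator values of the two regions it separates. The parity condition $Mv=0$ at a crossing forces this ``transition value'' to agree on opposite local edges, i.e.\ to be constant along each strand through the crossing; since a knot shadow has a single component, the value is globally constant. Constant $0$ forces $S=\emptyset$ or all regions (the region-adjacency graph is connected); constant $1$ says adjacent regions always differ, which is exactly a checkerboard coloring, giving the black or white class. Hence $|\ker M|=4$ and $\dim\ker M=2$, as you need; connectedness enters exactly where you predicted, and for $\mu$-component links the same argument yields a larger kernel. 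Two cautions: (i) your matrix counts incidences with multiplicity mod $2$, whereas the operation defined in the paper changes each crossing on the boundary of a region \emph{once}; these disagree at a reducible crossing, where one region meets the crossing at two opposite corners, so you should first pass to a reduced diagram by Reidemeister I moves (harmless for unknotting). (ii) You must still quote, or prove, the classical fact that every knot diagram can be turned into a diagram of the unknot by ordinary crossing changes (e.g.\ by making it descending); without it the reduction in your first paragraph has nothing to iterate over.
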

\phantom{x}

\noindent For links, the following is shown: 

\phantom{x}
\begin{theorem}[\cite{cheng}]
Any diagram of a link $L$ can be transformed into a diagram of a trivial link by a finite number of region crossing changes if and only if $L$ is a proper link. 
\label{cheng-thm}
\end{theorem}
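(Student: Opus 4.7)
The statement requires proving both directions. Model the region crossing change algebraically: for a diagram $D$ with regions $R_1,\dots,R_m$ and crossings $c_1,\dots,c_n$, introduce the $\mathbb{F}_2$-linear incidence map $A\colon \mathbb{F}_2^m\to\mathbb{F}_2^n$ where $A_{ij}=1$ iff $c_i$ appears an odd number of times on $\partial R_j$. A choice of regions $x\in\mathbb{F}_2^m$ realizes exactly the crossing-flip pattern $Ax\in\mathbb{F}_2^n$, so the theorem is equivalent to identifying when the crossing-change pattern that trivializes $L$ lies in the image of $A$.

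For the necessity direction, the plan is to extract a parity obstruction. Walking along the boundary of any region $R$ and tracking which component a strand belongs to, one returns to the starting component, so the number of transitions between component $K_i$ and its complement is even. Hence the number of crossings on $\partial R$ between $K_i$ and $L\setminus K_i$ is even. Each region crossing change therefore flips an even number of such mixed crossings, preserving the parity of $\sum_{j\neq i}\mathrm{lk}(K_i,K_j)$ for each $i$. Since the trivial link has all linking numbers zero, if $L$ can be unknotted by region crossing changes, all these parities vanish, i.e., $L$ is proper.

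For sufficiency, the goal is to show that, conversely, the parity invariants above are the \emph{only} obstructions to realizability, so that the image of $A$ has exactly $\mathbb{F}_2$-codimension equal to the number of components, cut out precisely by the component-wise parity functionals. Granting this characterization, the proof concludes as follows: pick any sequence of ordinary crossing changes trivializing $L$ (such a sequence exists since crossing changes can untangle any link). When $L$ is proper, the pattern of flipped crossings in this sequence respects every parity functional, hence lies in $\mathrm{Im}\,A$, hence is realized by a sequence of region crossing changes.

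The main obstacle is the algebraic lemma computing $\mathrm{Im}\,A$. I would attack it by induction on the number of components, with Theorem \ref{thm11} as the base case, performing a band-sum-type local operation at a chosen mixed crossing to reduce a proper $n$-component diagram to a proper $(n-1)$-component one, and tracking how regions and the parity obstructions transform across this reduction. The delicate part is verifying that the region structure pre- and post-reduction is compatible in a way that lets one pull the realizability statement back, and ensuring the parity invariants behave additively under the band-sum so that the inductive hypothesis applies.
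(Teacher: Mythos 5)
First, a point of comparison: the paper does not prove Theorem \ref{cheng-thm} at all --- it is quoted from \cite{cheng} --- so there is no in-paper argument to measure your proposal against, and it must be judged on its own. Your necessity direction is essentially right and is the standard argument: walking around the boundary of a region, arcs belonging to a fixed component $K_i$ alternate with the remaining arcs an even number of times, so every region crossing change flips an even number of mixed crossings involving $K_i$ and hence preserves the parity of $\sum_{j\neq i}\mathrm{lk}(K_i,K_j)$. One detail to patch: you must rule out a single crossing contributing two corners to the same region (which would break the count ``transitions $=$ flipped crossings''); this is where one uses reducedness, or the observation that a reducible crossing is necessarily a self-crossing and so never a mixed crossing.

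The sufficiency direction has a genuine gap. You correctly reduce it to the claim that $\mathrm{Im}\,A$ equals the common kernel of the parity functionals $\phi_i(y)=\sum_{c\ \text{mixed, involving}\ K_i} y_c$, but (a) your dimension count is off by one: the $\phi_i$ satisfy $\sum_i\phi_i=0$, since every mixed crossing is counted by exactly two of them, so the expected codimension of $\mathrm{Im}\,A$ is the number of components \emph{minus one} (consistent with the knot case, Theorem \ref{thm11}, where the matrix is full rank and the codimension is zero); and (b), more seriously, this claim \emph{is} the theorem --- all of the difficulty of \cite{cheng} is concentrated in it --- and you do not prove it. Your inductive splicing plan is in fact close in spirit to Cheng's actual strategy of joining two components at a mixed crossing and invoking the knot case \cite{shimizu-rcc}, but the step you explicitly defer (matching regions and realizable crossing-change vectors across the splice, checking that the parity conditions transform correctly, and handling split or non-reduced diagrams) is precisely where the work lies. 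As written, the argument establishes only the easy direction and a correct framing of the hard one.
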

\phantom{x}

\noindent For knots and links, the unknottability on region crossing change does not depend on the choice of diagram. 
On the other hand, for spatial graphs, it depends on the choice of diagram. 
For the two diagrams representing the same spatial graph in Figure \ref{diagram-dependent}, the left one cannot be unlinked by region crossing changes at any set of regions, whereas the right one gets unlinked by a region crossing change at the shaded region. \\
\begin{figure}[ht]
\begin{center}
\includegraphics[width=35mm]{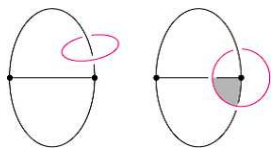}
\caption{Unknottability on region crossing change is diagram-dependent for spatial graphs. }
\label{diagram-dependent}
\end{center}
\end{figure}

\noindent For a spatial graph with a single component, i.e., an embedding of a connected graph, region crossing change is studied in \cite{HSS}. 
In this paper, we study region crossing change on a spatial graph with some components, and show the following theorems. 

\phantom{x}
\begin{theorem}
Let $G$ be a two-component spatial graph consisting of a spatial $\theta$-curve\footnote{Spatial $\theta$-curve is explained in Section \ref{spatial-graphs}. } and a knot. 
There exists a diagram $D$ of $G$ such that $D$ can be unknotted by a finite number of region crossing changes. 
\label{theta-knot}
\end{theorem}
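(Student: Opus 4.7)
The plan is to exhibit a diagram $D$ of $G$ for which the $\mathbb{F}_2$-linear \emph{region crossing change map} $\Phi_D : \mathbb{F}_2^{R(D)} \to \mathbb{F}_2^{C(D)}$, sending a subset of regions to the crossing-change vector it induces, is surjective. Once this surjectivity is in hand, any crossing-change pattern can be realized by region crossing changes, and in particular the classical unknotting crossing-change sequence for $G$ can be mimicked, yielding a diagram of a trivial spatial graph.

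Write the $\theta$-curve as $\theta = e_1 \cup e_2 \cup e_3$ with trivalent vertices $v_1, v_2$, and set $C = e_1 \cup e_2$, so that $C \cup K$ is a genuine two-component link sitting inside $G$. Starting from an arbitrary diagram $D_0$, I would first \emph{normalize} it: by sliding the endpoint of $e_3$ along $C$ through one of the trivalent vertices (a move that does not change the isotopy class of $G$) and by applying Reidemeister~1 moves where needed, I would adjust the diagram so that in the resulting $D$ the sublink $C \cup K$ is proper, i.e.\ $\mathrm{lk}(C,K) \equiv 0 \pmod 2$. The point of this preprocessing is to bring $C \cup K$ within the scope of Theorem~\ref{cheng-thm}.

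The core step is proving surjectivity of $\Phi_D$. I partition the crossings of $D$ into three classes: self-crossings of $K$, self-crossings of $\theta$, and mixed $\theta$-versus-$K$ crossings. Using regions whose boundary touches only arcs of $K$, Theorem~\ref{thm11} applied to $K$ alone supplies every self-crossing change of $K$. Analogously, using regions whose boundary touches only arcs of $\theta$, the spatial-graph result of \cite{HSS} handles the self-crossings of $\theta$. For mixed crossings between $C$ and $K$, properness together with Theorem~\ref{cheng-thm} yields any desired crossing-change pattern on those crossings. The remaining task is to realize arbitrary mixed crossing changes between $e_3$ and $K$; here I would exploit a trivalent vertex, where three edges meet a common set of regions, to transfer crossing parity between $e_3$ and $e_1$ (or $e_2$) using a single region touching that vertex.

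The main obstacle is this last point, the mixed-crossing analysis involving $e_3$. A careful local analysis near $v_1$ and $v_2$ is needed to verify that the three-edge incidence at a trivalent vertex supplies exactly the extra $\mathbb{F}_2$-degree of freedom that keeps the $e_3$-versus-$K$ parity from obstructing RCC-unknotting. This step is also where the preprocessing choice matters, since the slide of $e_3$ changes which parity must be handled. Granting this key lemma, the proof is concluded by combining the classical fact that crossing changes suffice to unknot any spatial graph with the surjectivity of $\Phi_D$.
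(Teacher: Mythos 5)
There is a genuine gap, and it sits exactly where you flagged the ``main obstacle.'' First, your normalization step cannot be carried out: $\mathrm{lk}(C,K)$ for the constituent cycle $C=e_1\cup e_2$ is an ambient-isotopy invariant of $G$, and neither Reidemeister moves nor a slide of the endpoint of $e_3$ through a trivalent vertex (which is an isotopy of $G$ and leaves the circle $e_1\cup e_2$ set-wise unchanged) can alter its parity. What is true is that the three constituent linking numbers $\mathrm{lk}(e_i\cup e_j,K)$ sum to zero modulo $2$, so at least one constituent cycle always forms a proper link with $K$; you would have to choose $C$ accordingly rather than force $e_1\cup e_2$ to work. Second, Theorem \ref{cheng-thm} does not deliver ``any desired crossing-change pattern'' on the $C$--$K$ crossings: in a two-component link diagram every region is adjacent to an even number of mixed crossings, so region crossing changes can only realize patterns flipping an even number of mixed crossings, and properness merely guarantees that the one pattern needed for unlinking has that parity. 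Third, and decisively, the realization of arbitrary $e_3$-versus-$K$ changes via ``the extra degree of freedom at a trivalent vertex'' is asserted, not proved, and this is precisely the content of the theorem; surjectivity of $\Phi_D$ genuinely depends on the choice of diagram (see Figure \ref{diagram-dependent}), so it cannot be obtained by soft arguments.

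The paper avoids all of this by never attempting to realize arbitrary mixed crossing changes. It builds a specific diagram: isotope so that the maximal tree $\{v_1,v_2\}\cup e_3$ is crossing-free, then shrink $e_1$ and then $e_2$, dragging the remaining edges along narrow ``wheel tracks,'' so that in the resulting diagram every crossing between the $\theta$-component and $K$ occurs as an adjacent parallel pair on a track. Each such pair is toggled by region crossing changes at the consecutive track regions from the vertex out to the pair (interior crossings are hit twice, the target pair once), which suffices to lift $\theta$ entirely above $K$; the two components are then unknotted separately by Lemma \ref{thHSS} and Lemma \ref{realize-k}, whose region sets leave inter-component crossings unchanged because an even number of the four local regions at each such crossing is selected. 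If you want to salvage your linear-algebraic route, you would need to prove that the region choice matrix of a suitably prepared $\theta\cup K$ diagram has the required rank, which is essentially the wheel-track computation in disguise.
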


\phantom{x}
\begin{theorem}
Let $n$ be a positive integer. 
Let $G$ be an $n$-component spatial graph whose components are all spatial $\theta$-curves. 
There exists a diagram $D$ of $G$ such that $D$ can be unknotted by a finite number of region crossing changes. 
\label{theta-n}
\end{theorem}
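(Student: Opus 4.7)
My plan is to prove Theorem \ref{theta-n} by induction on the number of components $n$. The base case $n=1$ amounts to a single spatial theta-curve, which is handled by the results of \cite{HSS} on region crossing change for connected spatial graphs. For the inductive step, I assume the statement for $n-1$ and write $G = \theta_1 \cup \cdots \cup \theta_n$, $G' = \theta_1 \cup \cdots \cup \theta_{n-1}$. Starting from a diagram $D'$ of $G'$ together with an unknotting region set $S'$ supplied by the induction hypothesis, the goal is to construct a diagram $D$ of $G$ by inserting $\theta_n$ and to produce an unknotting region set $S = \tilde S' \cup S''$, where $\tilde S'$ is a lift of $S'$ to $D$ and $S''$ is chosen to handle $\theta_n$.

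To insert $\theta_n$ I would place it inside a small disk adjacent to a single region of $D'$, connected to $G'$ by a narrow approach strip whose intersections with $D'$ are localized to a controlled set of crossings. With this placement, each region of $D'$ is either unchanged or cleanly split into two subregions by $\theta_n$'s arcs; the lift $\tilde S'$ defined by including both subregions of every split region reproduces the effect of $S'$ on all crossings of $D'$ while acting trivially on the self-crossings of $\theta_n$ and the inter-component crossings. I would then mimic the construction of Theorem \ref{theta-knot} on the pair $(\theta_n, G')$: the three edges and three cycles of $\theta_n$ provide the $\mathbb{F}_2$-flexibility needed to absorb any parity obstruction coming from the linking of $\theta_n$ with each component of $G'$, exactly as the theta-curve in Theorem \ref{theta-knot} does with respect to a single knot. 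This yields an additional region set $S''$, supported on regions local to $\theta_n$ and its approach strip, whose region crossing changes unknot $\theta_n$ and remove all crossings between $\theta_n$ and $G'$.

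The main obstacle is to verify the decoupling compatibility: the region crossing changes prescribed by $S''$ must not affect any crossing of $D'$ outside the controlled strip, so that the unknotting of $G'$ by $\tilde S'$ is preserved. This reduces to designing the approach strip so that each region appearing in $S''$ borders only crossings of the strip itself and self-crossings of $\theta_n$, with the parity bookkeeping on the inter-component crossings inside the strip absorbed by the third edge of $\theta_n$ in the manner of Theorem \ref{theta-knot}. Establishing this local-support property is the technical heart of the argument; once it is in place, $S = \tilde S' \cup S''$ unknots $D$ and the induction closes.
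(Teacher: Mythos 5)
Your overall architecture (make the components stackable by controlling the inter-component crossings, unknot each $\theta$-component by Lemma \ref{thHSS}, and use a parity argument to show that the region sets used for one component do not disturb the crossings between components) matches the paper's strategy, and your lift $\tilde S'$ is essentially the paper's observation that around each inter-component crossing an even number of the four local regions is selected. But there is a genuine gap at exactly the point you flag as the ``technical heart'': you never supply the mechanism by which the crossings between $\theta_n$ and $G'$ can actually be switched by region crossing changes. This cannot be waved away, because it is false for an arbitrary placement of $\theta_n$ (the left diagram of Figure \ref{diagram-dependent} is a diagram that cannot be unlinked by any region crossing changes), and appealing to ``the construction of Theorem \ref{theta-knot}'' is circular: that construction is precisely the missing mechanism, not a black box you can cite. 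The vague ``three edges and three cycles of $\theta_n$ provide the $\mathbb{F}_2$-flexibility needed to absorb any parity obstruction'' is not an argument; the obstruction of Theorem \ref{cheng-thm} for links is a genuine one, and what must be shown is that the chosen diagram evades it.

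The missing idea in the paper is the wheel-track construction: after isotoping the diagram so that the maximal tree of each $\theta$-component is crossing-free (and, for Theorem \ref{theta-n}, so that all the maximal trees sit in a single region), one contracts $e_1$ so that $v_1$ sweeps along it, dragging $e_2$ and $e_3$ in a narrow parallel band, and then contracts $e_2$ similarly. After this, every crossing between distinct components occurs as an adjacent pair on one of these narrow bands, and performing region crossing changes at the consecutive band regions from the vertex out to a given pair changes that pair and nothing else (every intermediate pair is hit twice). This is what lets one stack the components and reduce to the single-component Lemma \ref{thHSS}. Two smaller inaccuracies: region crossing changes never ``remove'' inter-component crossings, they only switch over/under information so that the diagram becomes splittable; and a region of $D'$ traversed by the strands of $\theta_n$ is in general split into more than two subregions (a band alone contributes two or three parallel arcs), so $\tilde S'$ must be defined as \emph{all} subregions of each region of $S'$ --- the parity argument at inter-component crossings and at self-crossings of $\theta_n$ still goes through with that correction. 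Finally, note that the paper avoids your induction entirely by gathering all $n$ maximal trees at once and ordering the components by height; your inductive scheme could be made to work, but each step would require rebuilding the wheel tracks anyway, so it buys nothing.
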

\phantom{x}

\noindent The rest of the paper is organized as follows: 
In Section \ref{spatial-graphs}, we review the study of region crossing change on spatial graphs. 
In Section \ref{proofs}, we prove Theorems \ref{theta-knot} and \ref{theta-n}. 
In Section \ref{s-handcuff}, we also consider spatial handcuff graphs. 
In Section \ref{incidence-matrices}, we study incidence matrices for spatial-$\theta$-curve diagrams. 
In Section \ref{ineffective-sets}, we consider ineffective sets for spatial-$\theta$-curve diagrams.

\section{Spatial-graph diagrams} 
\label{spatial-graphs} 

In this section, we prepare some terms of spatial-graph diagrams regarding that knots and links are included by spatial graphs, and review some results on region crossing change. \\

A {\it graph} is a pair of sets of vertices and edges. 
Each connected graph $g$ which has at least one vertex has a {\it maximal tree}, where a maximal tree is a connected subgraph of $g$ which includes no cycles and includes all the vertices of $g$. 
Since a spatial graph is an embedding of a graph, every connected spatial graph which has a vertex has a maximal tree. \\

It is known that two diagrams represent the same spatial graph if and only if they are equivalent up to the {\it Riedemeister moves} shown in Figure \ref{reidemeister}. 
\begin{figure}[ht]
\begin{center}
\includegraphics[width=90mm]{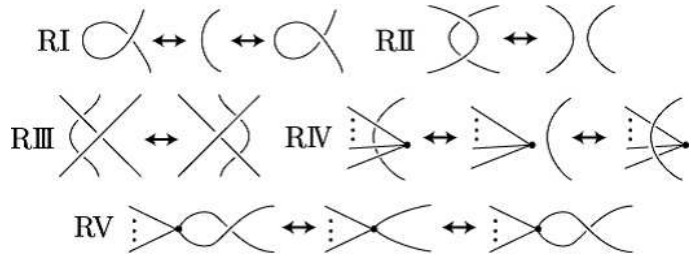}
\caption{Reidemeister moves. }
\label{reidemeister}
\end{center}
\end{figure}
A crossing $p$ of a spatial-graph diagram $D$ is {\it reducible} if one can draw a circle $C$ on $S^2$ such that $C$ intersects only $p$ transversely as shown in Figure \ref{reducible}. 
A diagram $D$ is said to be {\it reducible} if $D$ has a reducible crossing. 
Otherwise, $D$ is said to be {\it reduced}, or {\it irreducible}.  
\begin{figure}[ht]
\begin{center}
\includegraphics[width=40mm]{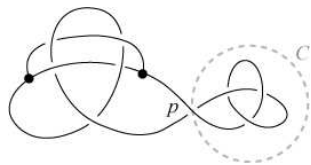}
\caption{A reducible diagram with a reducible crossing $p$. }
\label{reducible}
\end{center}
\end{figure}
A {\it cutting circle} of a diagram $D$ is a circle on $S^2$ intersecting an edge transversely at exactly one point as shown in Figure \ref{cutting}. 
We call such an edge a {\it cutting edge}. 
\begin{figure}[ht]
\begin{center}
\includegraphics[width=45mm]{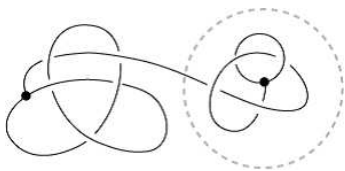}
\caption{A diagram with a cutting edge. }
\label{cutting}
\end{center}
\end{figure}
For spatial graphs of one component, the following is shown: 

\phantom{x}
\begin{theorem}[\cite{HSS}]
Let $G$ be a spatial graph of one component, and let $D$ be a diagram of $G$ without cutting edges. 
Any crossing change on $D$ is realized by a finite number of region crossing changes. 
\label{HSSthm}
\end{theorem}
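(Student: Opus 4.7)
The plan is to recast the statement as a linear-algebra problem over $\mathbb{F}_2$. Assign to each region $R$ of $D$ the vector $v_R \in \mathbb{F}_2^c$ (where $c$ is the number of crossings) whose $p$-coordinate is the parity of the number of corners at which the crossing $p$ meets $R$. A region crossing change at a set $\mathcal{S}$ of regions corresponds to adding $\sum_{R \in \mathcal{S}} v_R$ to the crossing-sign vector, so the theorem is equivalent to the claim that $\{v_R\}_R$ spans $\mathbb{F}_2^c$, i.e., that every single crossing change is realizable.

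The base case is when $G$ has no graph-vertex: then $G$ is a connected knot, and the claim is precisely what the proof of Theorem \ref{thm11} establishes, since that argument produces, for each crossing $p$, an explicit set of regions whose RCC realizes the single crossing change at $p$. For the general case, I would induct on the number of graph-vertices of $G$, reducing to the knot case by successively smoothing vertices. Given a graph-vertex $v$ of degree $d$, the no-cutting-edge hypothesis ensures that every edge at $v$ lies on a cycle of $G$, so one can pair up the strands at $v$ along cycles through $v$ and locally replace the star by $\lfloor d/2 \rfloor$ disjoint arcs, producing a new diagram $D'$ with the same crossings as $D$ but one fewer graph-vertex. Each region of $D'$ is a union of regions of $D$, and one verifies that the incidence vector of any $D'$-region lies in the row span of the $D$-incidence vectors, so it suffices to prove the statement for $D'$, to which the inductive hypothesis applies.

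The main obstacle is ensuring that $D'$ still satisfies the hypotheses of the theorem---that it remains a diagram of a one-component spatial graph with no cutting edge---so that the induction hypothesis can be invoked. Odd-degree vertices are the critical case, since they admit no local perfect pairing of strands; the pairing must then be completed globally by matching an unpaired strand-end at $v$ with one at another odd-degree vertex via a path in $G$, using connectedness. Keeping this global smoothing consistent across several odd-degree vertices without introducing a cutting edge or disconnecting the diagram is where the combinatorial subtlety lies, and the no-cutting-edge condition (equivalently, the $2$-edge-connectedness of the relevant subgraph) is precisely what allows the requisite choices to be made. An alternative, more direct attack worth trying in parallel is to fix a target crossing $p$, choose a cycle $\gamma \subset G$ passing through both strands at $p$, apply Theorem \ref{thm11} to the knot sub-diagram supported on $\gamma$ to obtain a candidate region set $\mathcal{S}_\gamma$, and then correct the unwanted crossing changes on edges outside $\gamma$ using additional RCCs; here the hard part becomes showing that all such corrections can be simultaneously achieved, again using the no-cutting-edge hypothesis.
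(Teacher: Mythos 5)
This statement is quoted from \cite{HSS}; the present paper gives no proof of it, so I can only assess your argument on its own terms. Your primary route---induction on the number of graph-vertices by smoothing each vertex into $\lfloor d/2\rfloor$ arcs---breaks down exactly at the odd-degree case, and this is not a repairable technicality: it already fails for the $\theta$-curve, the central object of this paper. A $\theta$-curve has two vertices of degree $3$; any local pairing of strands leaves one unpaired end at each vertex, and your proposed fix (``complete the pairing globally via a path in $G$'') would have to reuse edges already consumed by the pairing, so the result is either a dangling arc (not a spatial-graph diagram) or a closed walk traversing an edge twice (not a subdiagram of $D$ with the same crossings). There is also a modelling inconsistency: region crossing change switches every crossing \emph{adjacent} to the region exactly once, so the correct incidence vector records adjacency, not corner-parity; with adjacency, the vector of a merged region of $D'$ is not in general the mod-$2$ sum of its constituent $D$-region vectors (a crossing adjacent to two of them contributes $0$ to the sum but $1$ to the union), so the span-containment step of your induction is not justified either.

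Your ``alternative'' attack is much closer to a viable proof, and in fact needs fewer corrections than you fear: if $\gamma$ is a cycle of $G$ through both strands at $p$, pulling back the region set given by Lemma \ref{realize-k} for the knot subdiagram $D_\gamma$ (choosing all regions of $D$ inside each chosen region of $D_\gamma$) automatically leaves every crossing not internal to $\gamma$ unchanged, since such a crossing acquires $0$, $2$ or $4$ chosen corners. The genuine gap is the existence of $\gamma$: ``$D$ has no cutting edge'' is a \emph{diagrammatic} condition strictly weaker than $2$-edge-connectedness of the abstract graph, so the two strands at $p$ need not lie on a common cycle. A handcuff graph is the standard example: its connecting edge is a bridge, yet it admits diagrams with no cutting edge, and a crossing between the two loops lies on no cycle of the graph at all. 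Handling precisely those crossings is the real content of Theorem \ref{HSSthm} (and is why the hypothesis is phrased diagrammatically); neither of your two routes addresses it, so the proposal as it stands does not prove the statement.
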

\phantom{x}

\noindent A {\it $\theta$-curve} is a connected graph consisting of two vertices $v_1$ and $v_2$ and three edges which are adjacent to both $v_1$ and $v_2$. 
A {\it spatial $\theta$-curve} is an embedding of a $\theta$-curve in $S^3$. 
Since any diagram of a spatial $\theta$-curve does not have a cutting edge, we have the following: 

\phantom{x}
\begin{lemma}[\cite{HSS}]
Any crossing change on any diagram of a spatial $\theta$-curve is realized by a finite number of region crossing changes. 
\label{thHSS}
\end{lemma}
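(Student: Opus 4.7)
The plan is to derive this lemma directly from Theorem \ref{HSSthm}. Since a spatial $\theta$-curve is a single connected component, Theorem \ref{HSSthm} applies the moment we know the diagram has no cutting edges. So the entire task reduces to showing:

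\emph{Claim.} No diagram of a spatial $\theta$-curve admits a cutting edge.

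I would prove the claim by contradiction. Suppose some diagram $D$ of a spatial $\theta$-curve has a cutting edge $e$, witnessed by a cutting circle $C\subset S^2$. By definition, $C$ meets $D$ only at a single transverse point lying on the interior of $e$, so every other edge and every vertex of $D$ lies in the complement $S^2\setminus C$, which consists of two open disks $B_+$ and $B_-$. Since the one intersection point of $C$ with $e$ separates $e$ into two arcs ending at the two distinct endpoints of $e$, those endpoints must lie in opposite disks. The endpoints of $e$ are the two vertices $v_1,v_2$ of the $\theta$-curve, so without loss of generality $v_1\in B_+$ and $v_2\in B_-$. But the other two edges of the $\theta$-curve also join $v_1$ to $v_2$, and each of them is disjoint from $C$, hence contained in a single disk $B_+$ or $B_-$. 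An edge lying entirely in one disk cannot join a vertex in $B_+$ to a vertex in $B_-$, contradiction. Hence no cutting edge exists.

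With the claim in hand, Theorem \ref{HSSthm} immediately yields the lemma: any crossing change on $D$ is realized by a finite number of region crossing changes.

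The only step that requires any real care is the claim itself, and within it the mild subtlety that a cutting circle is required to be disjoint from the diagram except for the single point on $e$; once one is clear about this, the two remaining edges of the $\theta$-curve provide the obstruction. Everything else is immediate application of Theorem \ref{HSSthm}.
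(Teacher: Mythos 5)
Your proposal follows exactly the paper's route: the paper derives the lemma from Theorem \ref{HSSthm} by observing that no diagram of a spatial $\theta$-curve has a cutting edge (a fact it asserts without proof, and which your Jordan-curve argument correctly supplies, using that every edge of a $\theta$-curve joins the two distinct vertices). Your write-up is correct and simply makes explicit the step the paper leaves implicit.
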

\phantom{x}

\noindent Theorem \ref{HSSthm} is a generalization of the following: 

\phantom{x}
\begin{lemma}[\cite{shimizu-rcc}]
Any crossing change on any knot diagram is realized by a finite number of region crossing changes. 
\label{realize-k}
\end{lemma}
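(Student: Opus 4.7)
I would prove the lemma by constructing, for each crossing $c$ of a knot diagram $D$, a subset $S_c$ of regions whose simultaneous region crossing change realizes precisely the single crossing change at $c$. By $\mathbb{F}_2$-linearity this implies any set of crossing changes is realizable.

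For the construction, orient $K$ and traverse it starting at $c$. Since $K$ is a single component, it returns to $c$ exactly twice, decomposing $K$ into two oriented arcs $P_1$ and $P_2$, each starting and ending at $c$. Close $P_1$ at $c$ using a short arc on one side of $c$, producing a generically immersed closed curve $\gamma_1 \subset S^2$. The complement $S^2 \setminus \gamma_1$ carries a canonical $\mathbb{F}_2$-valued $2$-coloring $\chi_1$, defined for instance as the mod-$2$ intersection number with $\gamma_1$ of a path to a fixed basepoint. I let $S_c$ be the set of regions of $D$ whose $\chi_1$-value is $1$, using the outer sub-region if a region adjacent to $c$ is subdivided by the closing arc.

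The verification that the RCC at $S_c$ flips exactly the crossing $c$ reduces to a case analysis at each crossing $c'$: compute the four $\chi_1$-values on the regions surrounding $c'$ by following a small loop around $c'$ and flipping $\chi_1$ each time the loop crosses an edge of $\gamma_1$. If $c' \neq c$, then either both strands at $c'$ lie in $P_1$ (so $\gamma_1$ self-crosses at $c'$ and the four values alternate as $a, a+1, a, a+1$), or both lie in $P_2$ (so $\gamma_1$ avoids $c'$ and the values are constant), or the crossing is mixed (so $\gamma_1$ passes through transversely and the values read $a, a+1, a+1, a$). In all three subcases the four values sum to $0$ mod $2$, so RCC at $S_c$ does not change $c'$. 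At $c$ itself, the two strand-ends of $P_1$ are adjacent (not opposite) corners of $c$, because $P_1$ exits along one strand of $c$ and re-enters along the other; hence exactly two of the four edges meeting $c$ belong to $\gamma_1$ and they sit on a common side, producing a parity pattern like $a, a+1, a, a$ around the loop and an odd sum.

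The main obstacle is precisely this parity calculation at $c$: one must verify that the closing arc of $P_1$ can be chosen so that the two $\gamma_1$-edges at $c$ are adjacent around $c$, producing an odd rather than even sum. This uses the one-component hypothesis in an essential way, since it guarantees that the two ends of $P_1$ occupy adjacent, rather than opposite, corners of $c$. For a crossing between distinct components in a multi-component diagram, the analogous construction can fail, matching the known obstructions for links recorded in Theorem \ref{cheng-thm}.
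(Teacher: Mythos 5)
This lemma is quoted from \cite{shimizu-rcc} and stated without proof in the present paper, so the comparison is with the original source rather than with an in-paper argument. Your construction --- cutting the knot at $c$ into the two sub-arcs $P_1$ and $P_2$, closing $P_1$ by a short corner arc at $c$ to get an immersed closed curve $\gamma_1$, and taking $S_c$ to be the regions on which the mod-$2$ intersection number with $\gamma_1$ is $1$ --- is essentially the splice-and-checkerboard argument of the original proof, and your three-case verification at crossings $c'\neq c$ together with the observation that the two ends of $P_1$ occupy \emph{adjacent} corners of $c$ (so the corner values read $a,a+1,a,a$ there) is the right computation. Your handling of the region subdivided by the closing arc (take the outer sub-region) is also the correct choice. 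For reduced diagrams the argument is complete and matches the literature, and your closing remark correctly identifies where the one-component hypothesis enters and why the construction fails for inter-component crossings of links, consistent with Theorem \ref{cheng-thm}.

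There is, however, a genuine gap for reducible diagrams, which the statement ``any knot diagram'' includes. Your verification identifies the parity of the sum of the four \emph{corner} values of $\chi_1$ at a crossing $c'$ with the parity of the number of \emph{regions} of $S_c$ incident to $c'$; these agree only when the four corners of $c'$ lie in four distinct regions. Under the definition used here (a region crossing change changes ``all the crossings on the boundary of the region,'' each once --- compare the $0$--$1$ incidence convention of Section \ref{incidence-matrices}), a region occupying two opposite corners of a nugatory crossing changes that crossing once, not twice, so the two parities can disagree. Concretely, suppose $P_1$ carries a small kink whose nugatory crossing is $c'$ and whose surrounding region $R$ (the one meeting $c'$ at two opposite corners) lies inside $\gamma_1$, i.e.\ $\chi_1(R)=1$: the corner values around $c'$ are $0,1,0,1$ with even sum, but the incident regions are the monogon (value $0$), the region on the far side of the strand (value $0$), and $R$ (value $1$), so the region crossing changes at $S_c$ flip $c'$ as well as $c$. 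Thus $S_c$ need not realize the single crossing change at $c$ on a reducible diagram, and an analogous failure can occur at $c$ itself if $c$ is nugatory. To close the gap you need an additional argument at reducible crossings --- for instance, prove the lemma for reduced diagrams and then treat nugatory crossings separately, or correct $S_c$ near the offending crossings; this is precisely the kind of adjustment that \cite{shimizu-rcc} carries out and that the present paper alludes to in Section \ref{ineffective-sets}.
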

\phantom{x}

\noindent Theorem \ref{thm11} is obtained from Lemma \ref{realize-k} by repeating such region crossing changes. 
Similarly, we can unknot any diagram of a spatial $\theta$-curve by a finite number of region crossing changes by Lemma \ref{thHSS}.

\section{Proofs of Theorems \ref{theta-knot} and \ref{theta-n}} 
\label{proofs} 

In this section we prove Theorems \ref{theta-knot} and \ref{theta-n}. 

\phantom{x} 
\noindent {\it Proof of Theorem \ref{theta-knot}}. \ 
For a spatial graph $G$ consisting of a spatial $\theta$-curve and a knot, take a maximal tree $T$ of the $\theta$-curve component. 
Then $G$ has a diagram $D_0$ such that the corresponding part of $T$ has no crossings (see, for example, \cite{kawauchi}). 
We call the vertices $v_1$, $v_2$ and edges $e_1$, $e_2$ and $e_3$ of the $\theta$-curve component as indicated on $D_0$ in Figure \ref{d0}. 
We note that the maximal tree $T$ consists of $v_1$, $v_2$ and $e_3$ without crossings, and that $e_1$ and $e_2$ may have crossings. \\
\begin{figure}[ht]
\begin{center}
\includegraphics[width=30mm]{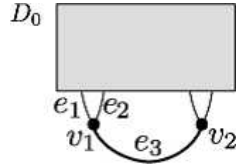}
\caption{A diagram $D_0$ of $G$ without crossings on $e_3$. }
\label{d0}
\end{center}
\end{figure}

Shrink $e_1$ so that $v_1$ moves to the right side, near $v_2$, and that $e_2$ and $e_3$ follow $e_1$ making a sufficiently narrow wheel track. 
We call the result $D_1$. 
Next, similarly shrink $e_2$ so that $v_2$ moves to the left side, and that $e_1$ and $e_3$ follow $e_2$ making a wheel track, and obtain $D_2$. 
On $D_2$, the $\theta$-curve component and the knot component make pairwise crossings at the wheel tracks.\\

We can change any such crossing pair by region crossing changes as follows: 
Let $p$ be a crossing pair. 
Let $t_i$ be the wheel track where $p$ belongs, and let $v_i$ be the adjacent vertex of $t_i$. 
Apply region crossing changes at the regions in the wheel track $t_i$ in order from $v_i$ to $p$. 
Then only the crossing pair $p$ changes. \\

Apply region crossing changes on $D_2$ so that the $\theta$-curve component is over than the knot component at every crossing between them. 
An example is shown in Figure \ref{d012}. 
We call the result $D_3$. \\
\begin{figure}[ht]
\begin{center}
\includegraphics[width=100mm]{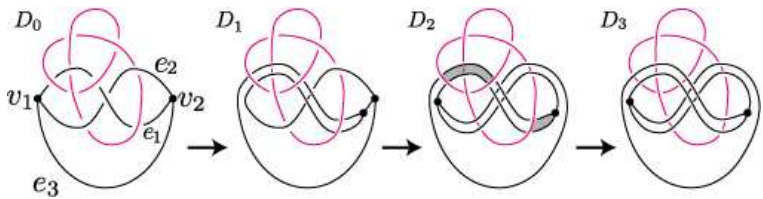}
\caption{Take a diagram $D_0$ so that $e_3$ has no crossing. 
Shrink $e_1$ so that $v_1$ moves to the right side, and obtain $D_1$. 
Shrink $e_2$ so that $v_2$ moves to the left side, and obtain $D_2$. 
On $D_2$, we can lift up the $\theta$-curve component over than the knot component by region crossing changes at some regions inside the wheel tracks. }
\label{d012}
\end{center}
\end{figure}

Let $D_3^{\theta}$ be the $\theta$-curve component diagram in $D_3$. 
Recall that $D_3^{\theta}$ has a set of regions $S^{\theta}$ such that $D_3^{\theta}$ is transformed into a diagram of the trivial $\theta$-curve by region crossing changes at $S^{\theta}$ (Lemma \ref{thHSS}). 
Apply region crossing changes on $D_3$ at the regions $S_3^{\theta}$ of $D_3$ corresponding to the regions in $S^{\theta}$. 
Then, the $\theta$-curve component gets unknotted, while crossings between the components are unchanged because there are disjoint four regions around each crossing between them, and an even number of them belongs to $S_3^{\theta}$. \\

Thus we obtain a diagram $D_4$, representing a splittable spatial graph with the $\theta$-curve component unknotted. 
Let $D_4^k$ be the knot component diagram in $D_4$. 
Recall that $D_4^k$ has a set of regions $S^k$ such that $D_4^k$ is transformed into a diagram $O_4^k$ of the trivial knot by region crossing changes at $S^k$ (Lemma \ref{realize-k}). 
Apply region crossing changes to $D_4$ at the regions of $D_4$ corresponding to the regions in $S^k$. 
Then $D_4^k$ gets unknotted, and crossings between the components are unchanged. 
Remark that some reducible crossings of $D_4^k$ may be different from $O_4^k$ after the region crossing changes while non-reducible crossings are the same. 
This does not matter since the unknottedness is unchanged even if a reducible crossing is changed. 
Similarly, the unknottedness of the $\theta$-curve component is also unchanged. 
Thus, we obtain a diagram of an unknotted spatial graph by a finite number of region crossing changes from a diagram $D_2$ of $G$. 
\hfill$\Box$

\phantom{x}
\noindent Next, we prove Theorem \ref{theta-n} in a similar way. 

\phantom{x}
\noindent {\it Proof of Theorem \ref{theta-n}}. \ 
Let $G$ be a spatial graph consisting of $n$ spatial $\theta$-curves, and let $D$ be a diagram of $G$. 
Take a maximal tree for each $\theta$-curve component. 
Gather the $n$ maximal trees to the same place in the following way. 
Choose a region $R$ of $D$. 
Take a small part of an edge of each maximal tree, and move it into $R$ by Reidemeister moves. 
Then move the adjacent vertices into $R$ along the edge, by Reidemeister moves. 
We call the result $D_0$, and name the components ${\theta}^k$, vertices $v^k _i$ and edges $e^k _j$ as shown in Figure \ref{d0n}. \\
\begin{figure}[ht]
\begin{center}
\includegraphics[width=70mm]{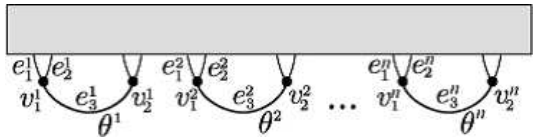}
\caption{Diagram $D_0$. }
\label{d0n}
\end{center}
\end{figure}

Move $v^i_1$ to another side near $v^i _2$ by shrinking $e^i _1$ for each $i$ with an order, where $e^i _2$ and $e^i _3$ follow $e^i _1$ making a wheel track, and we obtain a diagram $D_1$. 
Then move $v^i_2$ to another side by shrinking $e^i _2$ for each $i$, where $e^i _1$ and $^i _3$ follow $e^i _2$ making a wheel track. 
Thus, we obtain a diagram $D_2$ of $G$, and this is the diagram we required. 

We can transform $D_2$ into a diagram such that ${\theta}^i$ is over than ${\theta}^j$ ($i < j$) at each crossing between them by region crossing changes using the wheel-track method of the proof of Theorem \ref{theta-knot}. 
We call such a diagram $D_3$. 
Each diagram of ${\theta}^i$ has a set $S^i$ of regions such that ${\theta}^i$ gets unknotted by the region crossing changes by Lemma \ref{thHSS}. 
Apply region crossing changes at the corresponding regions in order from $S^1$ to $S^n$. 
Then we obtain a diagram of the unknotted spatial graph. 
\hfill$\Box$

\phantom{x}
\noindent From Theorems \ref{cheng-thm}, \ref{theta-knot} and \ref{theta-n}, we have the following corollary:

\phantom{x}
\begin{corollary}
Let $G$ be a spatial graph consisting of some spatial $\theta$-curves and a proper link. 
There exists a diagram $D$ of $G$ such that $D$ can be unknotted by a finite number of region crossing changes. 
\end{corollary}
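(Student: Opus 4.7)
\noindent\textit{Proof proposal for the Corollary.} The plan is to combine the wheel-track construction from the proofs of Theorems \ref{theta-knot} and \ref{theta-n} with Theorem \ref{cheng-thm}, treating the proper link $L$ as a single ``non-$\theta$'' component that plays the role of the knot component in the proof of Theorem \ref{theta-knot}. Write the $\theta$-curve components as $\theta^1,\dots,\theta^n$ and start from any diagram of $G$.

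First, following the proof of Theorem \ref{theta-n}, I would choose a maximal tree $T^i$ for each $\theta^i$, move all these maximal trees into a common small region by Reidemeister moves, and then shrink the two non-tree edges of each $\theta^i$ into wheel tracks. This produces a diagram $D_2$ in which every crossing between distinct components lies inside a wheel track, and therefore can be individually flipped by a sequence of region crossing changes along that wheel track (the ``wheel-track method''). Using this, I would lift every $\theta^i$ over every $\theta^j$ ($i<j$) and over $L$, obtaining a diagram $D_3$.

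Next, I would unknot the $\theta$-curves one at a time. By Lemma \ref{thHSS}, for each $i$ there is a set $S^i$ of regions of the $\theta^i$-only sub-diagram whose region crossing changes trivialize $\theta^i$. Performing the corresponding region crossing changes in $D_3$ (at all sub-regions of $D_3$ contained in each region of $S^i$) has the standard ``four-regions-around-a-crossing'' effect: every crossing not lying on $\theta^i$ is bounded by either $0$, $2$, or $4$ of the selected sub-regions and is therefore unchanged, so the other components and all inter-component crossings remain intact. After doing this for $i=1,\dots,n$, all the $\theta$-curves are unknotted, $L$ is still the same proper link, and the $\theta$-curves split off from $L$ up to reducible crossings.

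Finally, since $L$ is proper, Theorem \ref{cheng-thm} provides a set $S^L$ of regions of the $L$-only sub-diagram whose region crossing changes trivialize $L$. Performing these region crossing changes in the current diagram (again at the sub-regions of $D_3$ contained in each region of $S^L$), the same parity argument shows that no crossing of any $\theta^i$ and no inter-component crossing is altered, except possibly at reducible crossings of the $\theta^i$'s, which do not affect unknottedness (as already noted at the end of the proof of Theorem \ref{theta-knot}). The main point to verify carefully is precisely this last parity bookkeeping in the presence of several components: that a region crossing change at a region of the $L$-only (respectively $\theta^i$-only) sub-diagram touches every foreign crossing an even number of times. This is the only non-routine step, and it is handled exactly as in the proof of Theorem \ref{theta-knot}. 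The diagram obtained is an unknotted spatial graph, so $D_2$ is the required diagram $D$. \hfill$\Box$
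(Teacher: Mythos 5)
Your proposal is correct and follows exactly the route the paper intends: the paper states this corollary as an immediate consequence of Theorems \ref{cheng-thm}, \ref{theta-knot} and \ref{theta-n} without writing out a proof, and your argument is precisely the expected combination of the wheel-track construction, Lemma \ref{thHSS} for each $\theta$-component, the even-parity count of selected subregions around foreign crossings, and Theorem \ref{cheng-thm} for the proper link. No gaps; you have simply made explicit the details the paper leaves to the reader.
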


\section{Spatial handcuff graphs} 
\label{s-handcuff} 

A {\it handcuff graph} is a connected graph consisting of two vertices $v_1$, $v_2$ and two loops based on $v_1$ and $v_2$, and an edge connecting $v_1$ and $v_2$. 
A {\it spatial handcuff graph} is an embedding of a handcuff graph in $S^3$. 
Similarly to spatial $\theta$-curve, we can unknot a spatial graph consisting of a spatial handcuff graph and a knot by region crossing change by taking a suitable diagram as shown in Figure \ref{handcuff-unknotting}. 
\begin{figure}[ht]
\begin{center}
\includegraphics[width=90mm]{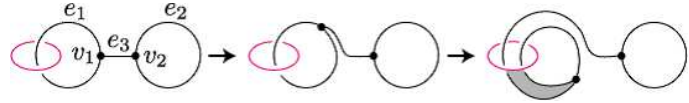}
\caption{A spatial-graph diagram which can be unlinked by region crossing changes by moving the diagram. }
\label{handcuff-unknotting}
\end{center}
\end{figure}
We have the following corollary: 

\phantom{x}
\begin{corollary}
Let $G$ be a two-component spatial graph consisting of a spatial handcuff graph and a knot. 
There exists a diagram $D$ of $G$ such that $D$ can be unknotted by a finite number of region crossing changes. 
\end{corollary}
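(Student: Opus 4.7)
The plan is to mirror the strategy used in the proof of Theorem \ref{theta-knot}, with the bridge edge of the handcuff graph playing the role of the crossing-free edge $e_3$ of the theta-curve, and the two loops $\ell_1, \ell_2$ playing the roles of the parallel edges $e_1, e_2$.

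Let $G = H \cup K$ where $H$ is the spatial handcuff graph with vertices $v_1, v_2$, bridge $e_0$, and loops $\ell_1$ at $v_1$ and $\ell_2$ at $v_2$. First, I would choose a diagram $D_0$ of $G$ in which $e_0$ has no crossings; this is possible because the maximal tree $T = \{v_1, v_2, e_0\}$ of the handcuff graph can always be drawn without crossings, by the same argument used in Theorem \ref{theta-knot}. I would then shrink each loop $\ell_i$ toward its vertex $v_i$, producing a narrow wheel track along which every crossing involving $\ell_i$ appears as an adjacent pair; call the result $D_2$. Using region crossing changes along these wheel tracks (exactly as in the proof of Theorem \ref{theta-knot}), I would lift $H$ over $K$ at every inter-component crossing, producing a diagram $D_3$.

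Next, I would unknot $H$ using region crossing changes. Each loop $\ell_i$ is an embedded circle, so it can be regarded as a knot, and by Lemma \ref{realize-k} any crossing change on its sub-diagram is realized by region crossing changes. Combined with the wheel-track control over crossings between $\ell_1$ and $\ell_2$, this lets us unknot $H$ while preserving every inter-component crossing, because each inter-component crossing has four adjacent regions of which an even number lies in the change set (the wheel-track structure ensures this, exactly as in the proof of Theorem \ref{theta-knot}). Finally, I would apply Lemma \ref{realize-k} to unknot $K$, again preserving inter-component crossings for the same reason, yielding a diagram of the unknotted $G$.

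The hard part will be the step of unknotting $H$: unlike the theta-curve, the handcuff graph has a bridge edge, which is typically a cutting edge, so Theorem \ref{HSSthm} does not apply and there is no ready-made analog of Lemma \ref{thHSS} for handcuff graphs. The key to overcoming this obstacle is the wheel-track construction of Step 2, which localizes each loop near its vertex and lets us control crossings involving each loop individually, so that the two loops can be unknotted one at a time by applying Lemma \ref{realize-k} to each loop as a knot, rather than requiring a full handcuff-graph version of Lemma \ref{thHSS}.
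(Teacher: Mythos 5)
Your overall architecture (crossing-free bridge, wheel tracks, lift $H$ over $K$, unknot $H$, unknot $K$) matches the paper's, but the step where you unknot the handcuff component contains a genuine gap, and it is precisely the step the paper handles differently. The paper's resolution of the cutting-edge obstacle is simpler than the workaround you propose: it applies Reidemeister moves to the initial diagram so that the handcuff component has \emph{no} cutting edges (for instance, making the two loops overlap), and then Theorem \ref{HSSthm} --- which applies to any one-component diagram without cutting edges, not only to $\theta$-curves --- gives full control of every self-crossing of the handcuff component, playing exactly the role that Lemma \ref{thHSS} played for the $\theta$-curve. You assert that ``there is no ready-made analog of Lemma \ref{thHSS} for handcuff graphs,'' but Theorem \ref{HSSthm} is that ready-made tool once cutting edges have been removed.

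Your substitute --- applying Lemma \ref{realize-k} to each loop $\ell_i$ regarded as a knot --- does not suffice. Region crossing changes chosen from the regions of the knot diagram $\ell_i$ realize crossing changes only at the \emph{self}-crossings of $\ell_i$; by the very parity argument you invoke to show that inter-component crossings are preserved, they cannot alter a crossing between $\ell_1$ and $\ell_2$, nor a crossing on the bridge $e_0$ (which does carry crossings after the wheel-track construction, being one of the two parallel strands of each track). So if the two loops form a Hopf link in $D_0$, your procedure cannot unlink them; and even when the constituent link is trivial, unknotting $\ell_1$ alone while leaving the parallel $e_0$-strand of its wheel track untouched destroys the band structure and can leave $e_0$ entangled, whereas triviality of a spatial handcuff graph requires more than triviality of each loop. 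The wheel-track pairs do not close this gap: each pair couples a loop-crossing with a bridge-crossing, and you have not shown that such paired changes together with per-loop unknotting can trivialize $H$. The missing idea is the elimination of cutting edges so that Theorem \ref{HSSthm} applies to the whole handcuff component at once.
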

\phantom{x}

\begin{proof}
Let $e_1$, $e_2$ be the loop edge based on $v_1$, $v_2$, respectively, and let $e_3$ be the non-loop edge of the handcuff component. 
Then $v_1$, $v_2$ and $e_3$ form a maximal tree of the handcuff component. 
Take a diagram $D$ of $G$ such that $e_3$ has no crossings. 
If the handcuff component in $D$ has a cutting edge, apply Reidemeister moves to $D$ so that there are no cutting edges, and call the result $D_0$. 
Move $v_1$ along $e_1$ until it backs to the initial position, where $e_1$ and $e_3$ makes a wheel track, and we call the result $D_1$. 
Similarly, move $v_2$ along $e_2$ and obtain a diagram $D_2$, and this is the diagram we required. 
The rest steps are same to the proof of Theorem \ref{theta-knot}. 
\end{proof}
\phantom{x}

\noindent We also have the following corollary: 

\phantom{x}
\begin{corollary}
Let $G$ be a spatial graph consisting of some spatial $\theta$-curves, some spatial handcuff graphs and a proper link. 
There exists a diagram $D$ of $G$ such that $D$ can be unknotted by a finite number of region crossing changes. 
\end{corollary}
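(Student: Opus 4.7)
\noindent\emph{Proof plan for the final corollary.} The plan is to combine the wheel-track construction from the proofs of Theorems \ref{theta-knot} and \ref{theta-n} with the handcuff argument and with Cheng's theorem for proper links (Theorem \ref{cheng-thm}). Label the graph components as $\theta^1,\dots,\theta^m$ (spatial $\theta$-curves), $H^1,\dots,H^\ell$ (spatial handcuff graphs), and let $L$ be the proper link. First I would choose, for each $\theta^i$ and each $H^j$, a maximal tree (for $H^j$, the tree is $v_1,v_2$ together with the non-loop edge $e_3$; if a cutting edge appears, remove it by Reidemeister moves as in the proof of the handcuff corollary). Then I would start from a diagram of $G$ in which the tree parts carry no crossings, gather the trees into a common region, and shrink the non-tree edges one by one so that each such edge becomes a wheel track following its tree. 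The result is a diagram $D_2$ of $G$ in which, for every two distinct components of $G$, their crossings are confined to wheel tracks, so the wheel-track move (change region crossings in order from the adjacent vertex to the target crossing) can change any single inter-component crossing pair without touching the others.

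Next I would use the wheel-track moves on $D_2$ to produce a new diagram $D_3$ in which the components of $G$ are totally ordered in height: each $\theta^i$ lies above each $\theta^j$ for $i<j$, above every $H^j$, and above every component of $L$; each $H^j$ lies above $H^{j'}$ for $j<j'$ and above $L$; and the components of $L$ keep their relative heights. In $D_3$ every $\theta^i$-component sub-diagram and every $H^j$-component sub-diagram is independent of the others in the sense used in the proofs above, namely: the four distinct regions around any crossing between two different components are partitioned into pairs by each single component, so any set of region crossing changes that is determined by a single component flips an even number of those regions and thus leaves inter-component crossings untouched.

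Now I would apply the single-component results in succession. By Lemma \ref{thHSS}, for each $i$ there is a set $S^i$ of regions that unknots the sub-diagram of $\theta^i$; apply those region crossing changes in order $i=1,\dots,m$. By Theorem \ref{HSSthm} (used in the proof of the handcuff corollary), for each $j$ there is a set that unknots the sub-diagram of $H^j$; apply those in turn. Finally, since $L$ is proper, Theorem \ref{cheng-thm} yields a set of regions of the sub-diagram of $L$ whose region crossing changes trivialize $L$; apply them. At each stage the preceding even-region parity observation guarantees that all previously unknotted graph components and all inter-component crossings are unaffected (the only crossings that may be modified inside an already-unknotted sub-diagram are reducible, and these do not affect unknottedness, exactly as in the proof of Theorem \ref{theta-knot}).

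The main obstacle I expect is the bookkeeping in the third paragraph: one must verify carefully that the even-parity argument applies uniformly to every pair of distinct components, including $\theta^i$ versus $H^j$ and graph-component versus link-component. This follows because the wheel-track construction ensures that every inter-component crossing involves precisely two different components, and therefore its four neighboring regions are split $2+2$ by the component decomposition, so any region set chosen component-wise contributes an even number of flips at that crossing. Once this uniformity is checked, the conclusion follows by sequentially applying Lemma \ref{thHSS}, Theorem \ref{HSSthm}, and Theorem \ref{cheng-thm}. \hfill$\Box$
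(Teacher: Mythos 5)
Your proposal is correct and follows exactly the route the paper intends: the paper states this corollary without an explicit proof, as the evident combination of the wheel-track constructions from Theorems \ref{theta-knot} and \ref{theta-n}, the cutting-edge adjustment for handcuff components, and Theorem \ref{cheng-thm} for the proper link, which is precisely what you carry out. Your even-parity bookkeeping for inter-component crossings and the remark about reducible crossings match the corresponding steps in the proof of Theorem \ref{theta-knot}, so there is nothing to add.
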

\phantom{x}

\section{Incidence matrices} 
\label{incidence-matrices} 

In this section, we consider incidence matrices, and show the following: 

\phantom{x}
\begin{proposition}
Let $D$ be a diagram of a spatial $\theta$-curve, and let $D'$ be a diagram obtained from $D$ by crossing changes at some crossings. 
There exist exactly eight sets of regions of $D$ such that $D$ is transformed into $D'$ by region crossing changes at the regions. 
\label{eight}
\end{proposition}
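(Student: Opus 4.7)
The plan is to recast the question as a linear algebra problem over $\mathbb{F}_2$. Write $c$ for the number of crossings and $r$ for the number of regions of $D$, and let $M$ be the $c \times r$ matrix over $\mathbb{F}_2$ whose $(p,R)$-entry is $1$ precisely when the crossing $p$ appears an odd number of times on the boundary of the region $R$. For a subset $S$ of regions, $M\chi_S \in \mathbb{F}_2^c$ is the vector whose $p$-entry records whether the region crossing changes at $S$ flip the crossing $p$. Letting $v \in \mathbb{F}_2^c$ be the vector indicating the crossings at which $D$ and $D'$ differ, the sets $S$ realizing the transformation $D \to D'$ are in bijection with the solutions of $Mx = v$, which either form the empty set or an affine subspace of $\mathbb{F}_2^r$ parallel to $\ker M$.

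First I would establish existence by invoking Lemma \ref{thHSS}: every crossing change on any $\theta$-curve diagram is realized by region crossing changes, so $M$ is surjective. In particular $v$ lies in the image of $M$, so at least one $S$ exists, and $\mathrm{rank}(M) = c$.

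Next I would count the regions using Euler's formula on $S^2$. Viewing $D$ as a plane graph, its vertex set consists of the $c$ four-valent crossings together with the two three-valent $\theta$-vertices, so $V = c + 2$. The handshake identity gives $2E = 4c + 6$, hence $E = 2c + 3$, and therefore $F = 2 - V + E = c + 3$. Thus $r = c + 3$, and rank-nullity yields $\dim \ker M = (c+3) - c = 3$. The solution set of $Mx = v$ is an affine space parallel to $\ker M$, so it contains exactly $|\ker M| = 2^3 = 8$ elements, as claimed.

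I do not anticipate a substantial obstacle. Existence is handed to us by Lemma \ref{thHSS}, the region count is a one-line Euler-characteristic computation, and the final cardinality falls out of rank-nullity. The only small point requiring care is the definition of $M$ when a region is incident to a crossing at more than one of its boundary arcs, which is why the entries must be taken modulo $2$ rather than as integer multiplicities.
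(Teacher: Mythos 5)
Your argument is essentially the paper's own proof: the paper likewise counts regions via Euler's formula on $S^2$ to get $n+3$ columns (Lemma \ref{c3}), invokes Lemma \ref{thHSS} to show the region choice matrix has full rank $n$ (Lemma \ref{fullrank}), and concludes by rank--nullity over $\mathbb{F}_2$ that the solution set is a coset of a $3$-dimensional kernel, hence has $2^3=8$ elements. The one point where you diverge is your definition of the matrix entry as the \emph{parity} of boundary incidences: the paper (and the operation itself, which changes each crossing on the boundary of the region exactly once) uses plain adjacency, so at a reducible crossing the region meeting it at two corners should still get entry $1$ --- with your convention the matrix would no longer model the operation, and Lemma \ref{thHSS} would establish surjectivity of the wrong matrix.
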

\phantom{x}

\noindent We show an example: 

\phantom{x} 
\begin{example}
For the diagram $D$ in Figure \ref{t31}, if one wants to change the crossing $c_1$, one should solve the following simultaneous equations (see \cite{AS} for knots): 
\begin{figure}[ht]
\begin{center}
\includegraphics[width=30mm]{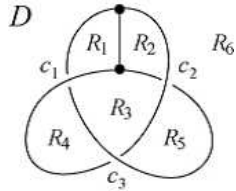}
\caption{A diagram of a spatial $\theta$-curve. }
\label{t31}
\end{center}
\end{figure}
\begin{align*}
\begin{cases}
c_1: \ x_1+x_3+x_4+x_6 \equiv 1 \pmod 2 \\
c_2: \ x_2+x_3+x_5+x_6 \equiv 0 \pmod 2 \\
c_3: \ x_3+x_4+x_5+x_6 \equiv 0 \pmod 2 \\
\end{cases}
\end{align*}
The first equation implies that one should choose an odd number of regions from $R_1$, $R_3$, $R_4$ and $R_6$ to change $c_1$. 
The second equation implies that one should choose an even number of regions from $R_2$, $R_3$, $R_5$ and $R_6$ not to change $c_2$. 
For the simultaneous equations, we have eight solutions, and then we have eight sets of regions $\{ R_1 \}$, $\{ R_5 , R_6 \}$, $\{ R_1 , R_2 , R_4 , R_6 \}$, $\{ R_2 , R_4 , R_5 \}$, $\{ R_1 , R_3 , R_6 \}$, $\{ R_3 , R_5 \}$, $\{ R_1 , R_2 , R_3 , R_4 \}$, $\{ R_2 , R_3 , R_4 , R_5 , R_6 \}$ to change only $c_1$ by region crossing changes. 
\label{ex-t31}
\end{example}
\phantom{x}

\noindent Let $G$ be a spatial graph, and let $D$ be a diagram of $G$. 
For crossings $c_1, c_2, \dots$ and regions $R_1, R_2, \dots$ of $D$, the {\it region choice matrix $M=(a_{ij})$ of $D$} is a matrix defined by the following: 
\begin{align*}
a_{ij}=
\begin{cases}
1 & \text{if } R_j \text{ is adjacent to } c_i \\
0 & \text{otherwise}
\end{cases}
\end{align*}
We note that the region choice matrix for knots is introduced in \cite{AS} not only for modulo 2. 
We also note that the region choice matrix of modulo 2 is the transposed matrix of the {\it incidence matrix} introduced in \cite{CG}. 
The region choice matrix of the diagram $D$ in Figure \ref{t31} is 
\begin{align*}
\begin{pmatrix}
1 & 0 & 1 & 1 & 0 & 1 \\
0 & 1 & 1 & 0 & 1 & 1 \\
0 & 0 & 1 & 1 & 1 & 1 
\end{pmatrix}
\end{align*}
and this is equivalent to the coefficient matrix of the simultaneous equation in Example \ref{ex-t31}. \\

For knots, the size of a region choice matrix of a knot diagram with $n$ crossings is $n \times (n+2)$, and then it is shown in \cite{CG} that the rank is $n$ using Lemma \ref{realize-k}, and the knot version of Proposition \ref{eight} is shown in \cite{kawauchi-ed} and \cite{hashizume} as the number of sets is four. 
We show Proposition \ref{eight} in the same way. 
First, we show the following: 

\phantom{x}
\begin{lemma}
Let $D$ be a diagram of a spatial $\theta$-curve with $n$ crossings. 
The size of a region choice matrix of $D$ is $n \times (n+3)$. 
\label{c3}
\end{lemma}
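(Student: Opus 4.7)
The plan is to count regions via Euler's formula on $S^2$, treating the diagram as a $4$-regular-plus-trivalent graph embedded in the sphere.

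First I would set up the combinatorics. A spatial $\theta$-curve diagram $D$ with $n$ crossings, viewed as a graph on $S^2$, has vertices of two kinds: the two trivalent vertices $v_1,v_2$ of the underlying $\theta$-curve, and the $n$ crossings, which are $4$-valent. So the vertex count is $V = n+2$. Counting half-edges, each of the $n$ crossings contributes $4$ and each of $v_1,v_2$ contributes $3$, giving $2E = 4n+6$, hence $E = 2n+3$.

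Next I would apply Euler's formula $V - E + F = 2$ on $S^2$ to obtain
\begin{align*}
F = 2 - V + E = 2 - (n+2) + (2n+3) = n+3,
\end{align*}
so the diagram has exactly $n+3$ regions. Since the region choice matrix by definition has one row per crossing and one column per region, its size is $n \times (n+3)$.

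There is no real obstacle here; the only thing to be a little careful about is that the $\theta$-curve diagram really does give a $2$-cell embedding in $S^2$ (so that Euler's formula applies as stated). This is automatic from the definition of a spatial-graph diagram as a projection to $S^2$ with transverse double points, since the complement in $S^2$ is a disjoint union of open disks (the regions). After that the computation is immediate.
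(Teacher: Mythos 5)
Your proposal is correct and follows essentially the same route as the paper: both count $V=n+2$ vertices (two trivalent vertices plus $n$ four-valent crossings), deduce $E=2n+3$ from the half-edge count $4n+6$, and apply Euler's formula on $S^2$ to get $n+3$ regions, whence the matrix is $n\times(n+3)$. Your added remark about the complement being a union of disks is a reasonable extra care point but does not change the argument.
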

\phantom{x}

\noindent Since crossings correspond to the rows, and regions correspond to the columns, Lemma \ref{c3} follows from the following lemma: 

\phantom{x}
\begin{lemma}
Every diagram of a spatial $\theta$-curve has the number of regions three more than the number of crossings. 
\end{lemma}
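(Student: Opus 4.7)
The plan is a direct application of Euler's formula on $S^2$. Given a diagram $D$ of a spatial $\theta$-curve with $n$ crossings, view $D$ as a planar multigraph embedded in $S^2$ whose vertices are the $n$ crossings together with the two $\theta$-vertices $v_1,v_2$, and whose edges are the arcs of $D$ between consecutive vertices of this planar multigraph. Since a $\theta$-curve is connected, so is this embedded multigraph, and Euler's formula $V-E+F=2$ applies.

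First I would count vertices. Each crossing contributes one vertex and each of $v_1,v_2$ contributes one vertex, so $V=n+2$. Next I would count edges by the handshake lemma, using that each crossing has degree $4$ (two strands passing through) and each $\theta$-vertex has degree $3$ (three edges of the $\theta$-curve are incident to it). The sum of degrees is therefore $4n + 2\cdot 3 = 4n+6$, hence $E=2n+3$.

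Finally, Euler's formula gives
\begin{align*}
F = 2 - V + E = 2 - (n+2) + (2n+3) = n+3,
\end{align*}
which is exactly the claim, since the faces of the planar multigraph are precisely the regions of the diagram $D$.

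The only point requiring any care is the justification that we genuinely get a connected planar multigraph to which Euler's formula applies; this follows immediately from connectedness of the $\theta$-curve. I do not expect a real obstacle here — the argument is essentially the same degree count that yields the $n+2$ region count for knot diagrams, adjusted for the two trivalent vertices which contribute an extra $+6-4=+2$ to the degree sum and thus $+1$ to $E$, and simultaneously $+2$ to $V$, giving the net $+1$ shift in $F$ from $n+2$ to $n+3$.
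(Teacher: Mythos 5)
Your proof is correct and follows essentially the same route as the paper: both treat the diagram as a connected planar multigraph with $V=n+2$ (crossings plus the two trivalent vertices), use the degree sum $4n+6$ to get $E=2n+3$, and apply Euler's formula on $S^2$ to conclude $F=n+3$. No issues.
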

\phantom{x}

\begin{proof}
Let $D$ be a diagram of a spatial $\theta$-curve with $n$ crossings, and let $|D|$ be a graph obtained from $D$ by regarding each crossing to be a vertex. 
That is, $|D|$ is a graph on $S^2$ with $v=n+2$ vertices. 
Looking locally at each vertex of $|D|$ which corresponds to a crossing on $D$, there are four edges around it, and looking at each vertex of $|D|$ which corresponds to a vertex on $D$, there are three edges around it. 
Hence, the number of total endpoints of edges of $|D|$ is $4n+6$. 
Since each edge has two endpoints, the number $e$ of edges of $|G|$ is $2n+3$. 
By substituting to the equation $v-e+f=2$ of Euler's characteristic of $S^2$, we have the number of the regions, $f=n+3$. 
\end{proof}
\phantom{x}

\noindent Secondary, we show the following lemma: 

\phantom{x}
\begin{lemma}
Let $D$ be a diagram of a spatial $\theta$-curve with $n$ crossings, and let $M$ be a region choice matrix of $D$. 
Then, the rank of $M$ is $n$, namely, $M$ is full-rank. 
\label{fullrank}
\end{lemma}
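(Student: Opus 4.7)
The plan is to deduce full row rank directly from Lemma \ref{thHSS} via a linear-algebraic reformulation over $\mathbb{F}_2$. The region choice matrix $M$ naturally defines a linear map $\varphi \colon \mathbb{F}_2^{n+3} \to \mathbb{F}_2^n$ given by $x \mapsto Mx \pmod 2$: a characteristic vector $x$ encoding a subset $S$ of regions gets sent to the vector whose $i$-th coordinate is the parity of the number of regions in $S$ incident to $c_i$. Since each region crossing change at $R_j$ toggles precisely the crossings on $\partial R_j$, performing region crossing changes at $S$ changes exactly those crossings $c_i$ for which $(Mx)_i \equiv 1 \pmod 2$.

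With this dictionary, showing $\operatorname{rank}(M) = n$ is equivalent to showing that $\varphi$ is surjective. But this is exactly what Lemma \ref{thHSS} provides: for every target vector $b \in \mathbb{F}_2^n$ (interpreted as the indicator of which crossings we wish to change), there is, by Lemma \ref{thHSS}, a finite sequence of region crossing changes realizing precisely that crossing-change pattern, and this sequence corresponds to some $x \in \mathbb{F}_2^{n+3}$ with $Mx = b$. Thus every standard basis vector of $\mathbb{F}_2^n$ lies in the image of $\varphi$, so the image is all of $\mathbb{F}_2^n$, and hence $\operatorname{rank}(M) = n$.

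There is essentially no obstacle; the combinatorial work has been absorbed into Lemma \ref{thHSS}, and what remains is simply the observation that region crossing changes compose additively modulo 2, so two applications at the same region cancel. One should perhaps remark in the proof that, because crossings can be changed independently in any order and are governed only by parity, repeating the existence statement of Lemma \ref{thHSS} for each standard basis vector $e_i$ of $\mathbb{F}_2^n$ is enough to conclude surjectivity of $\varphi$, after which Proposition \ref{eight} will follow immediately from $\dim \ker \varphi = (n+3) - n = 3$, giving $2^3 = 8$ solutions for each achievable $b$.
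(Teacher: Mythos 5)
Your proof is correct and follows essentially the same route as the paper: both invoke Lemma \ref{thHSS} to realize each standard basis vector $e_i$ as a $\mathbb{F}_2$-linear combination of columns of $M$ (equivalently, surjectivity of $x \mapsto Mx$), which forces $\operatorname{rank}(M) = n$. The only difference is presentational — you make the linear map and the mod-2 cancellation explicit, while the paper states the same fact directly in terms of column vectors.
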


\begin{proof}
Let $c_1, c_2, \dots$ and $c_n$ be the crossings of $D$. 
By Lemma \ref{thHSS}, we can change only $c_i$ by region crossing changes at some regions. 
In terms of matrices, we can create the column vector such that the $i$th element is $1$ and the others are $0$ by a linear combination of some columns of $M$, for any $i \in \{ 1, 2, \dots ,n \}$. 
This means the rank of $M$ is $n$. 
\end{proof}
\phantom{x}

\noindent Then we prove Proposition \ref{eight}. 

\phantom{x}
\noindent {\it Proof of Proposition \ref{eight}.} \ 
Consider a simultaneous equations whose coefficient matrix is $M$. 
Since the degree of freedom of the solution is obtained by subtracting the rank of $M$ from the number of columns of $M$, in this case the degree is $(n+3)-n=3$ by Lemmas \ref{c3} and \ref{fullrank}. 
Since we work on modulo 2, the number of the solutions is $2^3=8$. 
\hfill$\Box$

\phantom{x}
\noindent We remark that Proposition \ref{eight} does not hold for spatial-handcuff-graph diagrams. 
Some crossing changes on a diagram with a cutting edge are not realized by region crossing changes, as shown in Figure 4 in \cite{HSS}.

\section{Ineffective sets} 
\label{ineffective-sets} 

\noindent In this section we consider the diagramatical implications of Proposition \ref{eight}. 
A set $S$ of regions of a diagram is said to be {\it ineffective} when the region crossing changes at all the regions in $S$ do not change the diagram \cite{IS}. 
The following is shown for knots: 

\phantom{x}
\begin{lemma}[\cite{shimizu-rcc}]
Let $D$ be a reduced knot diagram with a checkerboard coloring. 
Then the set $B$ of the black-colored regions of $D$ is ineffective. 
\end{lemma}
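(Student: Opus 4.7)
The plan is to exploit the fact that, in a reduced diagram, every crossing is genuinely surrounded by four distinct regions, together with the standard structural property of the checkerboard coloring at a crossing.

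First I would recall that a checkerboard coloring assigns colors so that two regions sharing an edge of the underlying $4$-regular graph $|D|$ receive opposite colors. Consequently, at each crossing $c$ of $D$, the four incident local sectors alternate in color around $c$: the two sectors that are diagonally opposite share the same color, while the two adjacent sectors have opposite colors. So among the (at most four) regions meeting $c$, the black regions contribute exactly two of the four sectors at $c$.

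Next I would use the hypothesis that $D$ is reduced. Since there is no reducible crossing, no region occupies two distinct sectors at the same crossing; the four sectors at $c$ belong to four genuinely different regions of $D$. Therefore the number of \emph{regions} in $B$ that are adjacent to $c$ (in the sense of the region choice matrix) equals the number of black sectors at $c$, which is exactly two.

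Finally, performing the region crossing change at every region of $B$ simultaneously changes each crossing $c$ once for each black region on its boundary. By the previous step, this count is $2$ for every $c$, and two crossing changes at the same crossing cancel. Hence every crossing of $D$ is left unchanged, which is precisely the statement that $B$ is ineffective. The only subtle step is the use of reducedness to guarantee that the two black sectors at a crossing come from two \emph{different} regions of $B$; without it, the figure-eight kink is a counterexample because the unique outer region is adjacent to the kink at two sectors but contributes only once to the region crossing change.
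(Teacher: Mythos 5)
Your proof is correct and follows essentially the same argument the paper relies on: at each crossing of a reduced diagram the checkerboard coloring selects exactly the two diagonally opposite (hence distinct, by reducedness) black regions, so every crossing is changed twice and therefore not at all — this is precisely the ``diagonal two regions are chosen around each crossing'' reasoning the paper uses in its proof of the analogous Corollary for spatial $\theta$-curves, and your remark about reducible crossings matches the paper's own caveat. No issues.
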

\phantom{x}

\noindent For reducible diagrams, we may need a modification to $B$ at some reducible crossings to get ineffective (see Figure 11 in \cite{shimizu-rcc}). 
For spatial $\theta$-curves, we have the following: 

\phantom{x}
\begin{corollary}
Let $G$ be a spatial $\theta$-curve consisting of vertices $v_1, v_2$ and edges $e_1, e_2$ and $e_3$. 
Let $D$ be a reduced diagram of $G$, and let $k^i$ be the knot diagram obtained by removing $e_i$ $(i \in \{ 1,2,3 \} )$. 
The set of regions of $D$ which are black-colored on a checkerboard coloring on $k^i$ is ineffective. 
\label{k-ine}
\end{corollary}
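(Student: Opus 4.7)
The plan is to show directly that at every crossing of $D$ an even number of the four adjacent local regions are black. Since $D$ is reduced, the four local regions at each crossing are distinct regions of $D$, so this will imply that the region crossing changes at the set of black regions toggles every crossing an even number of times and hence leaves $D$ unchanged.

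The key observation is that $k^i$ is obtained from $D$ by removing the edge $e_i$, and removal of an edge can only merge regions, never split them. Hence each region of $D$ is contained in a unique region of $k^i$, and we may transfer the checkerboard coloring of $k^i$ to $D$: a region of $D$ is black precisely when the region of $k^i$ containing it is black. I would then case-split at a crossing $c$ of $D$. If $c$ does not involve $e_i$, then $c$ is still a crossing of $k^i$ with the same four local corners; by the alternating property of checkerboard coloring, exactly two of the four local positions at $c$ in $k^i$ are black (regardless of whether some coincide as global regions of $k^i$), and the four distinct regions of $D$ at $c$ inherit these colors, so exactly two of them are black. If $c$ involves $e_i$ with both strands belonging to $e_i$, then removing $e_i$ merges all four corners into a single local region of $k^i$, so all four regions of $D$ at $c$ share one color, giving a count of $0$ or $4$ black regions. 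If $c$ involves $e_i$ with the other strand belonging to some edge $\beta$, then removing $e_i$ merges the four corners pairwise on either side of $\beta$; the two resulting local regions of $k^i$ are separated by a subarc of $\beta$ and therefore receive opposite checkerboard colors, so exactly two of the four regions of $D$ at $c$ are black.

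The main subtlety I anticipate is making the second case fully precise: one must verify that the pairwise merging of the four corners on the two sides of $\beta$ really is what happens after the (global) removal of $e_i$, and that the two merged local regions of $k^i$ are genuinely separated by $\beta$ so that the checkerboard rule forces opposite colors. The reduced hypothesis on $D$ is essential throughout, since it guarantees that at each crossing the four adjacent positions are distinct regions of $D$; if a single region of $D$ occupied two opposite corners of a crossing, it would contribute only once to the count and the parity argument would fail, which is exactly the phenomenon that forces the reducible-crossing modifications noted after the preceding lemma for knots.
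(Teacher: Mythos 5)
Your proposal is correct and follows essentially the same route as the paper's proof: the same three-way case split at each crossing (a crossing of $k^i$, a self-crossing of $e_i$, and a crossing between $e_i$ and another edge), showing that in each case an even number (two diagonal, four or zero, or two adjoining) of the black regions meet the crossing. Your version just spells out more carefully how the corners merge upon removing $e_i$ and why the reduced hypothesis is needed.
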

\phantom{x}

\begin{proof}
If we choose such black regions, diagonal two regions are chosen around each crossing of $k^i$, four or no regions are chosen at each crossing of $e_i$ because $e_i$ is ignored on $k^i$, and adjoining two regions are chosen around each crossing between $k^i$ and $e_i$ for the same reason. 
Hence, all the crossings are unchanged by the region crossing changes. 
\end{proof}
\phantom{x}

\noindent For reducible diagrams, we may need a modification at some reducible crossings (see Figure 12 in \cite{HSS}). 
Let $D$ be a reduced diagram of a spatial $\theta$-curve on $\mathbb{R}^2$. 
Let $k^i$ be the knot diagram as mentioned in Corollary \ref{k-ine}. 
Give checkerboard coloring to each $k^i$ so that the outer region is colored white. 
We call the set of regions of $D$ which are black-colored (resp. white-colored) $B^i$ (resp. $W^i$) on the checkerboard coloring to $k^i$. 
We have the following: 

\phantom{x}
\begin{lemma}
The equality $ B^l = \left( B^m \cup B^n \right) \setminus \left( B^m \cap B^n \right)$ holds, where $(l, m, n)$ is a permutation of $(1, 2, 3)$. 
\label{lem-set}
\end{lemma}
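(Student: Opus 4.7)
The plan is to reinterpret each checkerboard coloring as a parity count of edge crossings along a path and then add modulo $2$. Fix an unbounded region of $D$ as a basepoint $R_0$, which by the stated convention is white in every $k^i$-coloring. For any other region $R$, choose a path $\gamma$ from $R_0$ to $R$ on $S^2$ meeting $D$ transversely, in the interior of edges and away from the vertices $v_1, v_2$ and the crossings of $D$. For $j \in \{1,2,3\}$, let $a_j(R) \in \{0,1\}$ be the parity of the number of intersection points of $\gamma$ with $e_j$.

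Since $k^i$ is the knot diagram obtained by removing $e_i$, its underlying arcs are exactly the strands of the two remaining edges. Writing $\{l,m,n\} = \{1,2,3\}$ and taking $i = l$, the path $\gamma$ meets $k^l$ transversely in $a_m(R) + a_n(R)$ points modulo $2$, so the color of $R$ in the checkerboard coloring of $k^l$ with $R_0$ white is
\[
c_l(R) \equiv a_m(R) + a_n(R) \pmod 2,
\]
where $c_l(R)=1$ encodes black. This formula is independent of $\gamma$ by the standard well-definedness of checkerboard colorings on $S^2$.

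Adding the analogous expressions for $c_m$ and $c_n$ modulo $2$,
\[
c_m(R) + c_n(R) \equiv (a_l + a_n) + (a_l + a_m) \equiv a_m + a_n \equiv c_l(R) \pmod 2.
\]
Hence $R \in B^l$ if and only if $c_l(R) = 1$, if and only if exactly one of $c_m(R)$ and $c_n(R)$ equals $1$, if and only if $R$ lies in exactly one of $B^m$ and $B^n$. This is precisely the condition $R \in (B^m \cup B^n) \setminus (B^m \cap B^n)$, which gives the claimed equality.

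There is no serious obstacle; the only point that merits care is checking that the parity-along-$\gamma$ recipe really computes the checkerboard color of $k^i$, but this is immediate because the color toggles at each transverse crossing of an arc of $k^i$, is unaffected by crossings of $\gamma$ with the removed edge $e_i$, and is unchanged at the degree-$2$ vertices $v_1, v_2$ of $k^i$.
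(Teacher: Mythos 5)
Your proof is correct and is in substance the same as the paper's: both arguments reduce the claim to the mod-2 identity $c_l \equiv c_m + c_n$ for the three checkerboard colorings, anchored at the white outer region, using the fact that crossing a strand of $e_j$ toggles the colorings of exactly the two diagrams $k^m$, $k^n$ with $m, n \neq j$ while leaving that of $k^j$ fixed. The paper packages this by showing that the sum $f = c_1 + c_2 + c_3$ changes by $0$ or $2$ across every edge of $D$ and hence equals $0$ or $2$ on every region, whereas you package it as a mod-2 intersection count along a transverse path from the outer region; these are two phrasings of the same computation.
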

\phantom{x}

\begin{proof}
For each region of $k^l$ with the above checkerboard coloring, give the value $1$ (resp. $0$) if the region is colored black (resp. white), for each $l \in \{ 1,2,3 \}$. 
And then, for each region $r_i$ of $D$, give the value $f(r_i)$ which is the sum of the values of the corresponding regions for $k^1$, $k^2$ and $k^3$. 
An example is shown in Figure \ref{ex02}. \\
\begin{figure}[ht]
\begin{center}
\includegraphics[width=100mm]{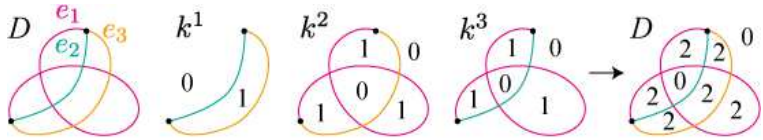}
\caption{Give the checkerboard coloring to each $k^l$ so that the outer region is white, and give the values $1$ and $0$. The labeling $f$ to each region of $D$ is obtained by summing the three values. }
\label{ex02}
\end{center}
\end{figure}

Now we show that each region $r_i$ of $D$ has $f(r_i)=0$ or $2$. 
Let $r_i$ and $r_j$ be regions sharing an edge $e^l$. 
Then $r_i$ and $r_j$ take different values on both $k^m$ and $k^n$ because $e^l$ exists on $k^m$ and $k^n$. 
And they take the same value on $k^l$ because $r_i$ and $r_j$ belongs to the same region on $k^l$. 
Hence, for each pair of regions $r_i$ and $r_j$ sharing an edge, the difference between $f(r_i)$ and $f(r_j)$ is $0$ or $2$. 
Since the outer region $r_0$ has $f(r_0)=0$, and the value of $f$ can be at most $3$, every region $r_i$ has $f(r_i)=0$ or $2$, with the breakdown $0+0+0$ or $0+1+1$. 
Therefore, $B^l$ is obtained by $ \left( B^m \cup B^n \right) \setminus \left( B^m \cap B^n \right)$. 
\end{proof}
\phantom{x}

\noindent Let $S$ be a set of regions of a reduced diagram $D$. 
Let $I$ be an ineffective set for $D$. 
We can obtain the same result of region crossing changes at the regions in $S$ by retaking the regions to $\left( S \cup I \right) \setminus \left( S \cap I \right)$ (see \cite{shimizu-rcc} for knots). 
For a reduced diagram of a spatial $\theta$-curve, by taking $B^1$, $W^1$ and $B^2$ as $I$  and the above retaking, we can obtain eight sets of regions whose effects by region crossing changes are the same. 
We remark that $B^1$, $W^1$ and $B^2$ are independent; 
Looking around a vertex, we can see that neither $B^1$, $W^1$ nor $B^2$ can be obtained by a combination of the others. 
See Figure \ref{aroundV}. 
\begin{figure}[ht]
\begin{center}
\includegraphics[width=60mm]{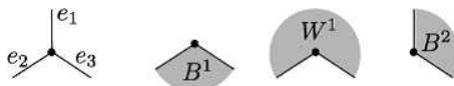}
\caption{Neither $B^1$, $W^1$ nor $B^2$ can be obtained by a combination of the others. }
\label{aroundV}
\end{center}
\end{figure}

\section*{Acknowledgments}

The authors thank Kota Koashi, Atsushi Oya, Hiroaki Saito, Shunta Saito and Mao Totsuka for valuable discussions in the seminars in Gunma College. 
They are also very grateful to Yoshiro Yaguchi for valuable discussions and helpful comments.

\end{document}